\documentclass[11pt,leqno]{amsart}

\usepackage{latexsym,amssymb}

\theoremstyle{plain} 

\newtheorem{theorem}{Theorem}[section]
\newtheorem*{Theorem B}{Theorem B}
\newtheorem*{Theorem A}{Theorem A}

\newtheorem{lemma}{Lemma}[section]

\theoremstyle{remark} 
\newtheorem{remark}{Remark}[section]
\theoremstyle{remark}

\theoremstyle{definition}

\numberwithin{equation}{section}
\def\<{\left < }
\def\>{\right >}
\def\({\left ( }
\def\){\right )}

\def\R{\mathbb{R}}
\def\C{\mathbb{C}}

\begin{document}

\title[Curvature inequalities]{Curvature inequalities for Lagrangian submanifolds:\\ the final solution}

\author[B.-Y. Chen]{Bang-Yen Chen}
\address{Department of Mathematics, Michigan State University, East Lansing, Michigan 48824-1027,
USA} \email{bychen@math.msu.edu}

\author[F. Dillen]{Franki Dillen}
\address{KU Leuven\\ Departement
Wiskunde\\ Celestijnenlaan 200B -- Box 2400\\ BE-3001 Leuven\\
Belgium} \email{franki.dillen@wis.kuleuven.be}

\author[J. Van der Veken]{Joeri Van der Veken}
\address{KU Leuven\\ Departement
Wiskunde\\ Celestijnenlaan 200B -- Box 2400\\ BE-3001 Leuven\\
Belgium} \email{joeri.vanderveken@wis.kuleuven.be}
\thanks{The third author is a post-doctoral researcher supported by the Research Foundation -- Flanders (F.W.O.).}

\author[L. Vrancken]{Luc Vrancken}
\address{Universit\'e de Valenciennes, Lamath, ISTV2, Campus du Mont Houy, 59313 Valenciennes,
Cedex 9, France; KU Leuven\\ Departement
Wiskunde\\ Celestijnenlaan 200B -- Box 2400\\ BE-3001 Leuven\\
Belgium} \email{luc.vrancken@univ-valenciennes.fr}

\begin{abstract}
Let $M$ be an $n$-dimensional Lagrangian submanifold of a complex space form. We prove a pointwise inequality $$\delta(n_1,\ldots,n_k) \leq a(n,k,n_1,\ldots,n_k) \|H\|^2 + b(n,k,n_1,\ldots,n_k)c,$$ with on the left hand side any delta-invariant of the Riemannian manifold $M$ and on the right hand side a linear combination of the squared mean curvature of the immersion and the constant holomorphic sectional curvature of the ambient space. The coefficients on the right hand side are optimal in the sence that there exist non-minimal examples satisfying equality at at least one point. We also characterize those Lagrangian submanifolds satisfying equality at any of their points. Our results correct and extend those given in \cite{CD}.
\end{abstract}

\keywords{delta-invariant, Lagrangian submanifold}

\subjclass[2000]{53B25, 53D12}

\maketitle

\section{Introduction}

Let $M^n$ be an $n$-dimensional Riemannian manifold. In the 1990ties, the first author introduced a new family of curvature functions on $M^n$, the so-called \emph{delta-invariants}. In particular, for any $k$ integers $n_1,\ldots,n_k$ satisfying $$2 \leq n_1 \leq \ldots \leq n_k \leq n-1\;\; and \;\;n_1 + \ldots + n_k \leq n,$$ a delta-invariant $\delta(n_1,\ldots,n_k)$ was defined at any point of $M^n$.

In a K\"ahler manifold with complex structure $J$, a special role is played by Lagrangian submanifolds. These are submanifolds for which $J$ maps the tangent space into the normal space and vice versa at any point. In \cite{CDVV1} and \cite{CDVV2} the following pointwise inequality for a Lagrangian submanifold $M^n \hookrightarrow \tilde M^n(4c)$ of a complex space form of constant holomorphic sectional curvature $4c$ was obtained:
\begin{equation}\begin{aligned}& \delta(n_1,\ldots,n_k) \leq  \dfrac{n^2 \left( n+k+1-\sum_{i=1}^kn_i \right)}{2 \left( n+k-\sum_{i=1}^kn_i \right)} \|H\|^2\\&\hskip.6in + \dfrac 12 \left( n(n-1) - \sum_{i=1}^k n_i(n_i-1) \right) c.\end{aligned} \end{equation}
Here, $H$ is the mean curvature vector of the immersion at the point under consideration. The importance of this type of inequalities is that the left hand side is intrinsic, i.e., it only depends on $M^n$ as a Riemannian manifold itself, whereas the right hand side contains extrinsic information, i.e., depending of the immersion under consideration. For example, the inequality shows that a necessary condition for a Riemannian manifold $M^n$ to allow a minimal Lagrangian immersion into $\C^n$ is that all the delta-invariants at all points are non-positive. However, it was proven in \cite{C1} that if equality holds in the above inequality at some point, the mean curvature of the immersion has to vanish at this point. This suggests that the inequality is not optimal, i.e., that the coefficient of $\|H\|^2$ can be replaced by a smaller value. The following improvement was given in \cite{CD}:
\begin{equation}\begin{aligned} \label{0.2}\delta(n_1,\ldots,n_k) \leq &\frac{n^2 \left(n -\sum_{i=1}^k n_i + 3k - 1 - 6\sum_{i=1}^k \frac{1}{2+n_i} \right)}
{2 \left(n -\sum_{i=1}^k n_i + 3k + 2 - 6\sum_{i=1}^k \frac{1}{2+n_i} \right)} \|H\|^2
\\&\hskip.0in +\frac{1}{2}\left(n(n-1)-\sum_{i=1}^k n_i(n_i-1)\right)c.\end{aligned} \end{equation}
It was pointed out in \cite{CD2} that the proof of inequality \eqref{0.2} given \cite{CD} is incorrect when $\sum_{i=1}^k \frac{1}{2+n_i}>\frac{1}{3}$. 

The purpose of this paper is two-fold. First, we correct the proof of the above inequality in the case $n_1+\ldots+n_k<n$ (Theorem \ref{theo1}) and then we show that the inequality can be improved in the case $n_1+\ldots+n_k=n$ (Theorem \ref{theo2}). In both cases, we also characterize those Lagrangian submanifolds attaining equality at any of their points, thereby showing that the inequalities are optimal, in the sense that non-minimal examples occur.


\section{preliminaries}

Let us first recall the definition of the delta-invariants. Let $p$ be a point of an $n$-dimensional Riemannian manifold $M^n$ and let $L$ be a linear subspace of the tangent space $T_pM^n$. If $\{e_1,\ldots,e_{\ell}\}$ is an orthonormal basis of $L$, we define
$$ \tau(L) := \sum_{i,j=1 \atop i<j}^{\ell} K(e_i \wedge e_j), $$
where $K(e_i \wedge e_j)$ denotes the sectional curvature of the plane spanned by $e_i$ and $e_j$. Remark that the right hand side is indeed independent of the chosen orthonormal basis and that $\tau := \tau(T_pM^n)$ is nothing but the scalar curvature of $M^n$ at $p$. Now let $n_1,\ldots,n_k$ be integers such that $$2 \leq n_1 \leq \ldots \leq n_k \leq n-1 \;\; and \;\; n_1 + \ldots + n_k \leq n,$$ then the delta-invariant $\delta(n_1,\ldots,n_k)$ at the point $p$ is defined as follows:
$$ \delta(n_1,\ldots,n_k)(p) := \tau - \inf \left\{ \sum_{i=1}^k \tau(L_i) \right\}, $$
where the infimum is taken over all $k$-tuples $(L_1,\ldots,L_k)$ of mutually orthogonal subspaces of $T_pM^n$ with $\dim(L_i)=n_i$ for $i=1,\ldots,k$. Due to a compactness argument, the infimum is actually a minimum. Remark that the simplest delta-invariant is $\delta(2) = \tau - \inf\{K(\pi)\ |\ \pi \mbox{ is a plane in } T_pM^n\}$.

In the following we will denote by $\tilde M^n(4c)$ a complex space form of complex dimension $n$ and constant holomorphic sectional curvature $4c$. Let $M^n \hookrightarrow \tilde M^n(4c)$ be a Lagrangian immersion and denote the Levi-Civita connections of $M^n$ and $\tilde M^n(4c)$ by $\nabla$ and $\tilde \nabla$ respectively. If $X$ and $Y$ are vector fields on $M^n$, then the formula of Gauss gives a decomposition of $\tilde\nabla_XY$ into its components tangent and normal to $M^n$:
$$ \tilde\nabla_X Y = \nabla_X Y + h(X,Y), $$ 
defining in this way the second fundamental form $h$, a symmetric $(1,2)$-tensor field taking values in the normal bundle. The mean curvature vector field is defined as 
$$ H := \frac 1n \mbox{trace}\hskip.01in(h). $$ 
An important property of Lagrangian submanifolds is that the cubic form, i.e., the $(0,3)$-tensor field on $M^n$ defined by $\langle h(\cdot,\cdot),J\cdot \rangle$, where $J$ is the almost complex structure of $\tilde M^n(4c)$, is totally symmetric. Finally, we recall the equation of Gauss: if $R$ is the Riemann-Christoffel curvature tensor of $M^n$ and $X$, $Y$, $Z$ and $W$ are tangent to $M^n$, then
\begin{equation}\begin{aligned} & \langle R(X,Y)Z,W \rangle = \langle h(X,W),h(Y,Z) \rangle - \langle h(X,Z),h(Y,W) \rangle \\&\hskip.7in + c \left( \langle X,W \rangle \langle Y,Z \rangle - \langle X,Z \rangle \langle Y,W \rangle \right). \end{aligned}\end{equation}

The following result on the existence of Lagrangian submanifolds can be found for example in \cite{C2}.

\begin{lemma} \label{lem1}
For any set of real numbers $\{a_{ABC}\ |\ A,B,C = 1,\ldots,n\}$, which is symmetric in the three indices $A$, $B$ and $C$, there exists a Lagrangian immersion $F:U\subseteq\R^n \to \mathbb C^n$ and a point $p \in U$ such that the second fundamental form $h$ of $F$ at $p$ is given by $\langle h(e_A,e_B),J F_{\ast} e_C \rangle = a_{ABC}$, where $\{e_1,\ldots,e_n\}$ is the standard basis of $\R^n$ and $J$ is the standard complex structure of $\C^n$.
\end{lemma}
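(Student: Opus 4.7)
The plan is to construct the Lagrangian immersion explicitly as the graph of the gradient of a cubic potential. This is a standard, clean way to prescribe a symmetric $3$-tensor as the second fundamental form at a single point while ensuring the Lagrangian condition comes for free from the symmetry of the Hessian.

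\textbf{Setup.} Identify $\C^n=\R^n\oplus i\R^n$ with the standard complex structure $J(v,w)=(-w,v)$ and Kähler form $\omega=\sum_{i=1}^n dx_i\wedge dy_i$. Define the cubic polynomial
\[
\phi(x_1,\ldots,x_n):=\frac{1}{6}\sum_{A,B,C=1}^n a_{ABC}\,x_A x_B x_C,
\]
which is well defined thanks to the total symmetry of $a_{ABC}$. Then set
\[
F:\R^n\longrightarrow \C^n,\qquad F(x):=\bigl(x,\nabla\phi(x)\bigr),
\]
and take $p$ to be the origin.

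\textbf{Verification of the Lagrangian condition.} Pulling back the Kähler form,
\[
F^{\ast}\omega=\sum_i dx_i\wedge d(\partial_i\phi)=\sum_{i,j}\partial_j\partial_i\phi\,dx_i\wedge dx_j,
\]
which vanishes identically by symmetry of the Hessian. Hence $F$ is Lagrangian, and clearly an immersion since its first component is the identity.

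\textbf{Computation of the second fundamental form at $p$.} Since $\phi$ vanishes to third order at $0$, $\mathrm{Hess}(\phi)(0)=0$, so $F_{\ast}e_A\big|_p=(e_A,0)$ and $JF_{\ast}e_C\big|_p=(0,e_C)$. Because $\C^n$ is flat,
\[
\tilde\nabla_{F_{\ast}e_A}F_{\ast}e_B=\bigl(0,(\partial_A\partial_B\partial_1\phi,\ldots,\partial_A\partial_B\partial_n\phi)\bigr),
\]
and at $p$ we have $\partial_A\partial_B\partial_C\phi(0)=a_{ABC}$. Thus $\tilde\nabla_{F_{\ast}e_A}F_{\ast}e_B\big|_p=(0,(a_{AB1},\ldots,a_{ABn}))$, which is already normal, so it equals $h(e_A,e_B)$. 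Pairing with $JF_{\ast}e_C=(0,e_C)$ yields exactly $a_{ABC}$.

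\textbf{Main obstacle.} There is essentially no obstacle: once one guesses the cubic potential, everything reduces to differentiating a polynomial. The only thing one must be careful about is that the constant $\tfrac{1}{6}$ in the definition of $\phi$ is exactly right to produce $a_{ABC}$ (and not $6a_{ABC}$ or similar) after three differentiations, and that the symmetry hypothesis on $a_{ABC}$ is used both to write $\phi$ unambiguously and to guarantee that $F$ is Lagrangian.
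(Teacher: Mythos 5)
Your proof is correct and takes essentially the same route as the paper: the paper also realizes $F$ as $(x_1+if_{x_1},\ldots,x_n+if_{x_n})$ for a potential $f$ chosen to be a cubic polynomial with $f_{x_Ax_Bx_C}=a_{ABC}$, which is exactly your graph-of-the-gradient construction with $\phi=f$. The only cosmetic difference is that you verify the identity $\langle h(e_A,e_B),JF_{\ast}e_C\rangle=a_{ABC}$ only at the origin (where the Hessian vanishes), whereas the paper notes the corresponding identity at every point of $U$; either suffices for the lemma.
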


\begin{proof}
Let $f:U \subseteq \R^n \to \R : (x_1,\ldots,x_n) \mapsto f(x_1,\ldots,x_n)$ be a smooth function on an open subset $U$ of $\R^n$. Then one can verify that $F : U \subseteq \R^n \to \C^n : (x_1,\ldots,x_n) \mapsto (x_1+if_{x_1},\ldots,x_n+if_{x_n})$ is a Lagrangian immersion satisfying $\langle h(e_A,e_B),JF_{\ast}e_C \rangle = f_{x_A x_B x_C}$ at every point of $U$. Here, an index $x_j$ means partial differentiation with respect to $x_j$.

For a given set of real numbers $\{a_{ABC}\ |\ A,B,C = 1,\ldots,n\}$, which is symmetric in the three indices, one can easily construct a smooth function $f$, a degree $3$ polynomial for example, which satisfies $f_{x_A x_B x_C} = a_{ABC}$ and one can define $F$ as above.
\end{proof}

We end this section by stating a fact of elementary linear algebra, which will be useful in the proof or our main results.

\begin{lemma} \label{lem2}
For real numbers $A_1,\ldots,A_k$, denote by
$\Delta(A_1,\ldots,A_k)$ the determinant of the matrix with $A_1,
\ldots, A_k$ on the diagonal and all other entries equal to $1$:
$$ \Delta(A_1,\ldots,A_k) = \left| \begin{array}{ccccc} A_1&1&\cdots&1&1 \\ 1&A_2&\cdots&1&1 \\ \vdots&\vdots&\ddots&\vdots&\vdots \\ 1&1&\cdots&A_{k-1}&1 \\ 1&1&\cdots&1&A_k\end{array}\right|.$$
Then
$$ \Delta(A_1,\ldots,A_k) = \prod_{i=1}^k (A_i-1) + \sum_{i=1}^k \prod_{j \neq i} (A_j-1). $$
In particular, if none of the numbers $A_1,\ldots,A_k$ equals $1$,
then
$$ \Delta(A_1,\ldots,A_k) = \left(1+\frac{1}{A_1-1}+ \ldots +\frac{1}{A_k-1}\right)(A_1-1)\ldots(A_k-1).$$
\end{lemma}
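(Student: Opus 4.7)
The plan is to write the matrix as $M = D + \mathbf{1}\mathbf{1}^{\top}$, where $D = \mathrm{diag}(A_1-1,\ldots,A_k-1)$ and $\mathbf{1}=(1,\ldots,1)^{\top}$. Then the $i$-th column of $M$ equals $(A_i-1)\,e_i + \mathbf{1}$, with $e_i$ the $i$-th standard basis vector. I would apply multilinearity of the determinant in each column separately, turning $\Delta(A_1,\ldots,A_k)$ into a sum of $2^k$ determinants, one for each way of choosing, in every column, either the $(A_i-1)e_i$ piece or the $\mathbf{1}$ piece.

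The key observation is that any choice in which $\mathbf{1}$ is selected in two or more positions produces a matrix with repeated columns and contributes $0$. Only $k+1$ terms survive. The term that chooses $(A_i-1)e_i$ in every column contributes $\prod_{i=1}^k(A_i-1)$, the determinant of a diagonal matrix. For each $\ell \in \{1,\ldots,k\}$, the term selecting $\mathbf{1}$ in column $\ell$ and $(A_j-1)e_j$ in the remaining columns contributes $\prod_{j \neq \ell}(A_j-1)$ times the determinant of the identity matrix with column $\ell$ replaced by $\mathbf{1}$; a cofactor expansion along that column shows the latter equals $1$. Adding all $k+1$ contributions yields the first displayed identity.

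The \emph{in particular} clause is then immediate: when no $A_i$ equals $1$, every factor $A_i-1$ is nonzero, and one may factor $\prod_{i=1}^k(A_i-1)$ out of both terms, rewriting the sum $\sum_{\ell=1}^k \prod_{j \neq \ell}(A_j-1)$ as $\prod_{i=1}^k(A_i-1)\cdot \sum_{i=1}^k \frac{1}{A_i-1}$, which combines with the leading product to give the stated factorized form.

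No genuine obstacle arises — this is a clean application of column multilinearity combined with the vanishing of determinants with repeated columns — so the main thing to get right is the bookkeeping of surviving terms and the sign in the $(\ell,\ell)$-cofactor. An alternative route via the matrix determinant lemma, $\det(D+\mathbf{1}\mathbf{1}^{\top})=\det(D)\bigl(1+\mathbf{1}^{\top}D^{-1}\mathbf{1}\bigr)$, gives the factorized form directly but only covers the case where every $A_i \neq 1$, so the multilinearity approach is preferable because it produces the unconditional first formula in one shot.
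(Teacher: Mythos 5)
Your proof is correct, but it takes a genuinely different route from the paper. You decompose the matrix as $D+\mathbf{1}\mathbf{1}^{\top}$ with $D=\mathrm{diag}(A_1-1,\ldots,A_k-1)$ and expand the determinant by column multilinearity; of the $2^k$ resulting terms, all those selecting the column $\mathbf{1}$ twice vanish by repeated columns, and the surviving $k+1$ terms are exactly the two summands of the stated identity (your evaluation of the determinant of the identity with one column replaced by $\mathbf{1}$ as equal to $1$ is right, e.g.\ by noting that the Leibniz expansion forces the permutation to fix every other index). The paper instead performs column and row operations to derive the three-term recursion
$$\Delta(A_1,\ldots,A_k)=(A_k+A_{k-1}-2)\,\Delta(A_1,\ldots,A_{k-1})-(A_{k-1}-1)^2\,\Delta(A_1,\ldots,A_{k-2}),$$
and then verifies that the closed-form expression satisfies this recursion with the initial conditions $\Delta(A_1)=A_1$ and $\Delta(A_1,A_2)=A_1A_2-1$. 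Your argument has the advantage of \emph{deriving} the formula directly rather than merely verifying a guessed answer, and it avoids the induction bookkeeping entirely; the paper's recursion, on the other hand, requires no prior knowledge of rank-one perturbation tricks and is a routine determinant manipulation. Both establish the unconditional identity, and your remark that the matrix determinant lemma alone would not suffice (since it needs $D$ invertible) is a fair reason to prefer the multilinearity expansion.
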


\begin{proof}
The result is true for $k=1$ and $k=2$. Now assume that $k\geq3$ and let $A_1,\ldots,A_k$ be arbitrary real
numbers. We will compute the determinant $\Delta(A_1,\ldots,A_k)$ by
first replacing the $k$th column by the $k$th column
minus the $(k-1)$th column, then replacing the $k$th row by the
$k$th row minus the $(k-1)$th row and finally developing the
determinant with respect to the last column:
\begin{align*}
\Delta(A_1,\ldots,A_k) &= \left| \begin{array}{ccccc} A_1&1&\cdots&1&0 \\ 1&A_2&\cdots&1&0 \\ \vdots&\vdots&\ddots&\vdots&\vdots \\ 1&1&\cdots&A_{k-1}&1-A_{k-1} \\ 1&1&\cdots&1&A_k-1\end{array} \right|
\\
\\
&= \left| \begin{array}{ccccc} A_1&1&\cdots&1&0 \\ 1&A_2&\cdots&1&0 \\ \vdots&\vdots&\ddots&\vdots&\vdots \\ 1&1&\cdots&A_{k-1}&1-A_{k-1} \\ 0&0&\cdots&1-A_{k-1}&A_k+A_{k-1}-2\end{array} \right| \\
\\&\hskip-.4in=(A_k+A_{k-1}-2)\Delta(A_1,\ldots,A_{k-1})-(A_{k-1}-1)^2\Delta(A_1,\ldots,A_{k-2}).
\end{align*}
It is now sufficient to verify that the expression for
$\Delta(A_1,\ldots,A_k)$ given in the statement of the lemma indeed
satisfies the recursion relation
\begin{equation}\begin{aligned}\notag & \Delta(A_1,\ldots,A_k)=(A_k+A_{k-1}-2)\Delta(A_1,\ldots,A_{k-1})\\& \hskip.5in-(A_{k-1}-1)^2\Delta(A_1,\ldots,A_{k-2}),\end{aligned} \end{equation}
with the initial conditions $\Delta(A_1)=A_1$ and
$\Delta(A_1,A_2)=A_1A_2-1$. This can be done by a straightforward
computation.
\end{proof}

\section{The main results}

Before stating our main theorems, we will introduce some notations. For a given delta-invariant $\delta(n_1,\ldots,n_k)$ on a Riemannian manifold $M^n$ (with $2 \leq n_1 \leq \ldots \leq n_k \leq n-1$ and $n_1 + \ldots + n_k \leq n$) and a point $p \in M^n$, we consider mutually orthogonal subspaces $L_1, \ldots, L_k$ with $\dim(L_i) = n_i$ of $T_pM^n$, minimizing the quantity $\tau(L_1)+ \ldots +\tau(L_k)$. We then choose an orthonormal basis $\{e_1,\ldots,e_{n}\}$ for $T_pM^n$ such that
\begin{align*}
& e_1,\ldots,e_{n_1} \in L_1, \\
& e_{n_1+1},\ldots,e_{n_1+n_2} \in L_2, \\
& \ \vdots \\
& e_{n_1+\ldots+n_{k-1}+1},\ldots,e_{n_1+\ldots+n_k} \in L_k,
\end{align*}
and we define
\begin{align*}
& \Delta_1 := \{1,\ldots, n_1\}, \\
& \Delta_2 := \{n_1+1,\ldots, n_1+n_2\}, \\
& \ \vdots \\
& \Delta_k := \{n_1+\ldots+n_{k-1}+1,\ldots,n_1+\ldots+n_k\}, \\
& \Delta_{k+1} := \{n_1+\ldots+n_k+1,\ldots,n\}.
\end{align*}
From now on, we will use the following conventions for the ranges of
summation indices:
$$ A,B,C\in\{1,\ldots,n\}, \quad i,j\in\{1,\ldots,k\}, \quad
\alpha_i,\beta_i\in\Delta_i, \quad r,s\in\Delta_{k+1}.$$
Finally, we define $n_{k+1} := n-n_1-\ldots-n_k$. Remark that this may be zero, in which case $\Delta_{k+1}$ is empty.
We shall denote the components of the
second fundamental form by $h^C_{AB}=\langle
h(e_A,e_B),Je_C\rangle$. Due to the symmetry of the
cubic form, these are symmetric with respect to the three indices
$A$, $B$ and $C$.

\begin{theorem} \label{theo1}
Let $M^n$ be a Lagrangian submanifold of a complex space form
$\tilde M^n(4c)$. Let $n_1,\ldots,n_k$ be integers satisfying $2\leq
n_1 \leq \ldots \leq n_k \leq n-1$ and $n_1+\ldots+n_k < n$. Then, at any point of $M^n$, we have
\begin{equation}\begin{aligned}\notag &\delta(n_1,\ldots,n_k)
\leq \frac{n^2 \left(n -\sum_{i=1}^k n_i + 3k - 1 - 6\sum_{i=1}^k \frac{1}{2+n_i} \right)}
{2 \left(n -\sum_{i=1}^k n_i + 3k + 2 - 6\sum_{i=1}^k \frac{1}{2+n_i} \right)} \|H\|^2
\\& \hskip1.0in +\frac{1}{2}\left(n(n-1)-\sum_{i=1}^k n_i(n_i-1)\right)c.
\end{aligned} \end{equation}

Assume that equality holds at a point $p \in M^n$. Then with the choice of basis and the notations introduced at the beginning of this section, one has
\begin{itemize}
\item $h^A_{BC}=0$ if $A,B,C$ are mutually different and not all in the same $\Delta_i$ ($i=1,\ldots,k$),
\item $\displaystyle{ h^{\alpha_i}_{\alpha_j \alpha_j} = h^{\alpha_i}_{rr} = \sum_{\beta_i \in \Delta_i} h^{\alpha_i}_{\beta_i \beta_i}=0 }$ for $i \neq j$,
\item $h^r_{rr} = 3 h^r_{ss} = (n_i+2) h^r_{\alpha_i \alpha_i}$ for $r \neq s$.
\end{itemize}
\end{theorem}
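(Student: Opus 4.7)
The plan is to use the Gauss equation to rewrite the inequality as a purely algebraic estimate on the components of the second fundamental form, and then to analyze the resulting constrained quadratic minimization by Lagrange multipliers, with Lemma \ref{lem2} handling the determinant that appears.

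With the adapted orthonormal frame and the notation $S_i^C:=\sum_{\alpha_i\in\Delta_i}h^C_{\alpha_i\alpha_i}$, the Gauss equation yields
\begin{align*}
2\tau&=n(n-1)c+n^2\|H\|^2-\|h\|^2,\\
2\tau(L_i)&=n_i(n_i-1)c+\sum_{C}\Bigl[(S_i^C)^2-\sum_{\alpha_i,\beta_i\in\Delta_i}(h^C_{\alpha_i\beta_i})^2\Bigr].
\end{align*}
Since the $L_i$ are chosen to realize the infimum defining $\delta$, subtracting gives $2\delta(n_1,\ldots,n_k)=2\tau-2\sum_i\tau(L_i)$, and after the $c$-terms on both sides of the claimed inequality cancel, the theorem reduces to the purely algebraic inequality
\[
\|h\|^2-\sum_{i,C}\sum_{\alpha_i,\beta_i\in\Delta_i}(h^C_{\alpha_i\beta_i})^2+\sum_{i,C}(S_i^C)^2\;\geq\;\frac{3n^2}{Q+2}\,\|H\|^2,
\]
to be proved for every totally symmetric $3$-tensor $h^A_{BC}$ on $\R^n$, where $Q:=n-\sum n_i+3k-6\sum\frac{1}{n_i+2}$.

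The heart of the proof is the constrained minimization of this left-hand side. The Lagrangian symmetry $h^C_{AB}=h^A_{BC}=h^B_{CA}$ groups the entries of $h$ according to how the indices $A,B,C$ distribute among the blocks $\Delta_1,\ldots,\Delta_{k+1}$; by tallying ordered triples $(A,B,C)$ one finds that each square $(h^C_{AB})^2$ enters the left-hand side with a specific non-negative coefficient determined by this block pattern. Components whose three indices lie in three distinct blocks, and off-diagonal components with both $A,B$ in a common $\Delta_i$, contribute non-negatively and therefore vanish at the minimum. The problem then reduces to a weighted quadratic minimization in the remaining free parameters---notably the entries $h^r_{rr}$, $h^r_{ss}$ (for $s\in\Delta_{k+1}\setminus\{r\}$), and $h^r_{\alpha_i\alpha_i}$---subject to the linear constraints $\sum_A h^C_{AA}=nH^C$. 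The Lagrange-multiplier conditions at the critical point force the proportionalities $h^r_{rr}=3\,h^r_{ss}=(n_i+2)\,h^r_{\alpha_i\alpha_i}$, and the common scale factor is determined by a coupled linear system whose coefficient matrix, after clearing denominators, takes the form $\Delta(A_1,\ldots,A_k)$ treated in Lemma \ref{lem2}, with $A_i-1$ proportional to $n_i+2$. Invoking Lemma \ref{lem2} evaluates this determinant in closed form---uniformly in the sign of $\sum\frac{1}{n_i+2}-\frac{1}{3}$---and yields the optimal constant $\frac{3n^2}{Q+2}$. The remaining equality conditions of the Lagrange analysis translate directly into the three bulleted assertions of the theorem; Lemma \ref{lem1} then guarantees that every configuration satisfying them is realized by a Lagrangian immersion into $\C^n$, establishing sharpness.

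The main obstacle is the bookkeeping of multiplicities in the reduction step. Since entries such as $h^r_{\alpha_i\alpha_i}=h^{\alpha_i}_{r\alpha_i}$ and $h^r_{rs}=h^s_{rr}$ are shared between different $h^C$, careless counting either double-counts or drops contributions and yields a suboptimal coefficient. This is precisely where the argument in \cite{CD} broke down in the range $\sum\frac{1}{n_i+2}>\frac{1}{3}$, at which threshold the sign of the relevant determinant flips; the closed-form evaluation in Lemma \ref{lem2} is what allows a uniform treatment across both regimes.
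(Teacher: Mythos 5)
Your reduction is sound and is essentially the paper's Step 1 in different clothing: the Gauss-equation computation, the resulting algebraic inequality $\|h\|^2-\sum_{i,C}\sum_{\alpha_i,\beta_i\in\Delta_i}(h^C_{\alpha_i\beta_i})^2+\sum_{i,C}(S_i^C)^2\geq\frac{3n^2}{Q+2}\|H\|^2$, and the observation that components with three distinct indices not all in one $\Delta_i$ enter with positive coefficients, are decoupled from the constraint, and hence vanish at the minimum (the first bullet) all match the paper. Recasting the positivity of the residual quadratic form as a constrained minimization via Lagrange multipliers, instead of the paper's eigenvector decomposition plus Sylvester's criterion, is a legitimate variant; but note that a critical-point computation only proves an inequality once you also record that the quadratic form is positive semidefinite, so that the constrained problem is convex and every critical point is a global minimum. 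Here that is easy (the subtracted squares are a sub-collection of the terms of $\|h\|^2$, so the form is manifestly nonnegative), but your sketch never says it.

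The genuine gap is that you analyze only one of the decoupled sectors. After the irrelevant components are killed, the remaining variables are the $h^C_{AA}$, the problem splits over the upper index $C$ into $n$ independent minimizations of $F_C$ subject to $\sum_A h^C_{AA}=nH^C$, and the optimal constant is the \emph{minimum over $C$} of the per-sector constants. Your critical-point analysis (the proportionalities $h^r_{rr}=3h^r_{ss}=(n_i+2)h^r_{\alpha_i\alpha_i}$ and the determinant handled by Lemma \ref{lem2}) treats only upper indices $r\in\Delta_{k+1}$, i.e.\ the paper's statement (II). You never verify the bound for upper indices $\gamma_\ell\in\Delta_\ell$ with $\ell\leq k$, i.e.\ statement (I), whose quadratic form is genuinely different (the squares $(h^{\gamma_\ell}_{\alpha_\ell\alpha_\ell})^2$ for the block containing $\gamma_\ell$ are not subtracted), whose per-sector constant is the one in \eqref{firstineqC1}, and for which one must check that it is dominated by the $\Delta_{k+1}$-sector constant \eqref{secondineqC1}. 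That comparison is where the paper's Case 1/2/3 analysis and Lemma \ref{lem2} do their real work, and it is also the sole source of the second bullet of the equality conditions, $h^{\alpha_i}_{\alpha_j\alpha_j}=h^{\alpha_i}_{rr}=\sum_{\beta_i}h^{\alpha_i}_{\beta_i\beta_i}=0$ (equality forces $F_{\gamma_\ell}=0$ in the non-binding sectors), which your sketch asserts ``translates directly'' but never derives. Without the sector-(I) analysis the argument establishes neither the inequality nor the full equality characterization.
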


\begin{proof} The proof consists of four steps.
\smallskip

\noindent\emph{Step 1: Set-up.} Fix a delta-invariant $\delta(n_1,\ldots,n_k)$ and a point $p\in M^n$. Take linear subspaces $L_1,\ldots,L_k$ of $T_pM^n$ and and orthonormal basis $\{e_1,\ldots,e_n\}$ of $T_pM^n$ as described above. From the equation of Gauss we obtain that
\begin{align*}
& \tau= \frac{n(n-1)}{2}c + \sum_{A} \sum_{B<C} (h^A_{BB}h^A_{CC}-(h^A_{BC})^2),\\
& \tau(L_i)= \frac{n_i(n_i-1)}{2}c + \sum_{A} \sum_{
\alpha_i<\beta_i} ( h^A_{\alpha_i
\alpha_i}h^A_{\beta_i\beta_i}-(h^A_{\alpha_i\beta_i})^2)
\end{align*}
for $i=1,\ldots,k$. We see that we can assume without loss of generality that $c=0$, and
we have
\begin{equation}\begin{aligned}\label{firstineq1}
&\hskip-.2in\tau - \sum_i \tau(L_i) = \\ &\sum_A \left\{ \sum_{r<s} (h^A_{rr}h^A_{ss}-(h^A_{rs})^2) + \sum_i \sum_{\alpha_i,r} (h^A_{\alpha_i \alpha_i}h^A_{\alpha_i,r}-(h^A_{\alpha_i r)})^2) \right.  \\ 
 & \left.  + \sum_{i<j} \sum_{\alpha_i,\alpha_j} (h^A_{\alpha_i \alpha_i}h^A_{\alpha_j \alpha_j} - (h^A_{\alpha_i \alpha_j})^2) \right\}   \\
&\leq  \sum_A \left\{ \sum_{r<s} h^A_{rr}h^A_{ss} + \sum_i \sum_{\alpha_i,r} h^A_{\alpha_i \alpha_i}h^A_{rr} + \sum_{i<j} \sum_{\alpha_i,\alpha_j} h^A_{\alpha_i \alpha_i}h^A_{\alpha_j \alpha_j} \right\}  \\
 & - \sum_r \sum_{B \neq r} (h^B_{rr})^2 - \sum_i \sum_{\alpha_i} \sum_{B \notin \Delta_i} (h^B_{\alpha_i \alpha_i})^2. \end{aligned}\end{equation}
We want to prove that \eqref{firstineq1} is less than or equal to
$$ n^2 C \|H\|^2 = C \sum_A \left( \sum_B h^A_{BB}\right)^2, $$
with
\begin{equation} \label{valueC1} C = \frac{n_{k+1} + 3k - 1 - 6\sum_{i=1}^k \frac{1}{2+n_i}}
{2 \left(n_{k+1} + 3k + 2 - 6\sum_{i=1}^k \frac{1}{2+n_i} \right)}. \end{equation}
In fact, we want to prove that this value for $C$ is the best possible one in the sense that the inequality in the theorem will no longer be true in general for smaller values of $C$.

In view of Lemma \ref{lem1}, we have to find the smallest possible $C$ for which the following two statements hold:
\begin{itemize}
\item[(I)] for any $\ell \in \{1,\ldots,k\}$ and any $\gamma_{\ell} \in \Delta_{\ell}$
\begin{align*} & \sum_{r<s} h^{\gamma_{\ell}}_{rr}h^{\gamma_{\ell}}_{ss} + \sum_i \sum_{\alpha_i,r} h^{\gamma_{\ell}}_{\alpha_i \alpha_i}h^{\gamma_{\ell}}_{rr} + \sum_{i<j} \sum_{\alpha_i,\alpha_j} h^{\gamma_{\ell}}_{\alpha_i \alpha_i}h^{\gamma_{\ell}}_{\alpha_j \alpha_j} \\ & - \sum_r (h^{\gamma_{\ell}}_{rr})^2 - \sum_{i \neq \ell} \sum_{\alpha_i} (h^{\gamma_{\ell}}_{\alpha_i \alpha_i})^2 \leq C \left( \sum_B h_{BB}^{\gamma_{\ell}} \right)^2,
\end{align*}
\item[(II)] for any $t \in \Delta_{k+1}$
\begin{align*} & \sum_{r<s} h^t_{rr}h^t_{ss} + \sum_i \sum_{\alpha_i,r} h^t_{\alpha_i \alpha_i}h^t_{rr} + \sum_{i<j} \sum_{\alpha_i,\alpha_j} h^t_{\alpha_i \alpha_i}h^t_{\alpha_j \alpha_j} \\ & - \sum_{r \neq t} (h^t_{rr})^2 - \sum_i \sum_{\alpha_i} (h^t_{\alpha_i \alpha_i})^2 \leq C \left( \sum_B h_{BB}^t \right)^2.
\end{align*}
\end{itemize}

\smallskip

\noindent\emph{Step 2: Finding the best possible $C$ in \emph{(I)}.} The inequality in (I) is equivalent to
\begin{equation}\begin{aligned} \label{quadraticform1}& (C+1) \sum_{i \neq \ell} \sum_{\alpha_{\ell}} (h_{\alpha_i \alpha_i}^{\gamma_{\ell}})^2 
+ C \sum_{\alpha_{\ell}} (h_{\alpha_{\ell} \alpha_{\ell}}^{\gamma_{\ell}})^2 + (C+1) \sum_r (h_{rr}^{\gamma_{\ell}})^2\\ \\& \hskip.4in +
2C \sum_i \sum_{\alpha_i < \beta_i} h_{\alpha_i \alpha_i}^{\gamma_{\ell}}h_{\beta_i \beta_i}^{\gamma_{\ell}} \\ &
+(2C-1) \left( \sum_{i<j} \sum_{\alpha_i,\alpha_j} h_{\alpha_i \alpha_i}^{\gamma_{\ell}} h_{\alpha_j \alpha_j}^{\gamma_{\ell}} + \sum_i \sum_{\alpha_i, r} h_{\alpha_i \alpha_i}^{\gamma_{\ell}} h_{rr}^{\gamma_{\ell}} + \sum_{r<s} h_{rr}^{\gamma_{\ell}} h_{ss}^{\gamma_{\ell}}\right)\\& \geq 0.
\end{aligned}\end{equation}
If we put $x_A = h^{\gamma_{\ell}}_{AA}$ for all $A=1,\ldots,n$,
then we can look at the left hand side of the above inequality as a
quadratic form on $\R^n$. In view of Lemma \ref{lem1}, we need to find necessary and sufficient conditions on $C$ for this quadratic form to be non-negative. Two times the matrix of this quadratic
form consists of $(k+1)^2$ blocks:
$$ M_{\ell} = \left( \Lambda_{ij} \right)_{i,j=1,\ldots,k+1} $$
with
\begin{align*}
& \Lambda_{\ell\ell} = \left( \begin{array}{ccc} 2C &\cdots&
2C\\\vdots&\ddots&\vdots\\2C&\cdots& 2C \end{array}\right) \in \R^{n_{\ell}\times n_{\ell}}, \\
& \Lambda_{k+1 \, k+1} = \left( \begin{array}{ccccc} 2(C+1)&2C-1&\cdots&2C-1&2C-1 \\
2C-1&2(C+1)&\cdots&2C-1&2C-1 \\ \vdots&\vdots&\ddots&\vdots&\vdots \\
2C-1&2C-1&\cdots&2(C+1)&2C-1 \\
2C-1&2C-1&\cdots&2C-1&2(C+1)\end{array} \right) \in \R^{n_{k+1} \times
n_{k+1}},\\
& \Lambda_{ii} = \left( \begin{array}{ccccc} 2(C+1)&2C&\cdots&2C&2C \\
2C&2(C+1)&\cdots&2C&2C \\ \vdots&\vdots&\ddots&\vdots&\vdots \\
2C&2C&\cdots&2(C+1)&2C \\
2C&2C&\cdots&2C&2(C+1)\end{array} \right) \in \R^{n_i \times
n_i} \\ & \hskip1in \mbox{if} \ i \neq \ell,k+1,\\
& \Lambda_{ij} = \left( \begin{array}{ccc} 2C-1 &\cdots&
2C-1\\\vdots&\ddots&\vdots\\2C-1&\cdots&2C-1\end{array}\right)
\in \R^{n_i\times n_j} \ \mbox{if} \ i\neq j.
\end{align*}

Remark that for every $i\in \{1,\ldots,k+1\}$, $M_{\ell}$ has the
following $n_i-1$ eigenvectors:
\begin{equation} \label{eigenvecs1} \begin{aligned}
& (0,\ldots,0, | 1,-1,0,\ldots,0,0, | 0,\ldots,0), \\
& (0,\ldots,0, | 1,0,-1,\ldots,0,0, | 0,\ldots,0), \\
& \quad \vdots \\
& (0,\ldots,0, | \underbrace{1,0,0,\ldots,0,-1,}_{\Delta_i} |
0,\ldots,0).
\end{aligned} \end{equation}
The eigenvalues are $0$, $3$ or $2$ depending on whether $i=\ell$, $i=k+1$ or $i
\neq \ell,k+1$ respectively. In total we have thus found $n-(k+1)$
eigenvectors of $M_{\ell}$ with non-negative eigenvalues. The orthogonal
complement of all these eigenvectors is spanned by
\begin{equation} \label{othereigenvecs1} v_i = \frac{1}{n_i}(0,\ldots,0, |  \underbrace{1,1,\ldots,1,}_{\Delta_i} | 0,\ldots,0), \quad i=1,\ldots, k+1. \end{equation}
It is now sufficient to prove that the matrix $M'_{\ell} = (v_i
M_{\ell} v_j^T)_{i,j=1,\ldots,k+1} \in \R^{(k+1)\times (k+1)}$ is non-negative. It
follows from a direct computation that
\begin{align*}
& (M'_{\ell})_{\ell\ell} = 2C, \quad (M'_{\ell})_{k+1 \ k+1} = 2C-1+\frac{3}{n_{k+1}}, \\
& (M'_{\ell})_{ii} = 2\left(C+\frac{1}{n_i}\right) \ \mbox{if} \ i\neq\ell,k+1, \quad
(M'_{\ell})_{ij} = 2C-1 \ \mbox{if} \ i\neq j.
\end{align*}

We investigate three cases.

\smallskip
\textbf{Case 1: $2C=1$.} In this case, $M'_{\ell}$ is
a diagonal matrix with positive diagonal entries, so it is
positive definite.

\smallskip
\textbf{Case 2: $2C>1$.} In this case, the matrix
$M'_{\ell}$ is always positive definite. To see this, it is
sufficient to verify that the matrix $M''_{\ell} =
M'_{\ell}/(2C-1)$ is positive definite. By Sylvester's criterion,
we have to verify that the $(j \times j)$-matrix in the upper left
corner of $M''_{\ell}$ has positive determinant for all
$j=1,\ldots,k$. From Lemma \ref{lem2}, it is sufficient to remark that
\begin{equation} \notag  \frac{2C}{2C-1}-1 > 0, \quad
\frac{2(C+1/n_j)}{2C-1}-1 > 0 \mbox{ for all } j \in
\{1,\ldots,k\}\setminus\{\ell\}\end{equation} and \begin{equation} \notag
\frac{3}{n_{k+1}(2C-1)} > 0.\end{equation}

\smallskip
\textbf{Case 3: $2C<1$.} It is now sufficient to require that $M''_{\ell} = M'_{\ell}/(2C-1)$ is non-positive. By
Sylvester's criterion, this is equivalent to the determinant of the
$(j \times j)$-matrix in the upper left corner of $M''_{\ell}$
having sign $(-1)^j$ for all $j=1,\ldots,k+1$. It follows from Lemma \ref{lem2} that these determinants are
\begin{align*}
& D_j = \frac{1}{(2C-1)^{j-1}} \left( \frac{(2C)^{\delta_j}}{2C-1} + \sum_{i=1 \atop i \neq \ell}^j \frac{n_i}{n_i+2} \right) \prod_{i=1 \atop i \neq \ell}^j \left( 1 + \frac{2}{n_i} \right) \ \mbox{for $j = 1,\ldots,k$,} \\
& D_{k+1} =  \frac{3}{n_{k+1}(2C-1)^k} \left( \frac{2C}{2C-1} + \frac{n_{k+1}}{3} + \sum_{i=1 \atop i \neq \ell}^k \frac{n_i}{n_i+2} \right) \prod_{i=1 \atop i \neq \ell}^k \left( 1 + \frac{2}{n_i} \right),
\end{align*}
where $\delta_j=0$ if $j<\ell$ and $\delta_j=1$ if $j\geq\ell$. Hence, we have
\begin{align*}
& \mathrm{sgn}(D_j) = (-1)^{j-1} \mathrm{sgn} \left( \frac{(2C)^{\delta_j}}{2C-1} + \sum_{i=1 \atop i \neq \ell}^j \frac{n_i}{n_i+2} \right) \ \mbox{for $j = 1,\ldots,k$,} \\
& \mathrm{sgn}(D_{k+1}) = (-1)^k \mathrm{sgn} \left( \frac{2C}{2C-1} + \frac{n_{k+1}}{3} + \sum_{i=1 \atop i \neq \ell}^k \frac{n_i}{n_i+2} \right),
\end{align*}
and the conditions for $M''_{\ell}$ to be non-positive are
\begin{equation} \label{firstcondC1}
\left\{\begin{array}{l}
\displaystyle{ \frac{(2C)^{\delta_j}}{2C-1} + \sum_{i=1 \atop i \neq \ell}^j \frac{n_i}{n_i+2} \leq 0 \quad \mbox{for} \ j = 1,\ldots,k,} \\
\displaystyle{ \frac{2C}{2C-1} + \frac{n_{k+1}}{3} + \sum_{i=1 \atop i \neq \ell}^k \frac{n_i}{n_i+2} \leq 0. }
\end{array}\right.
\end{equation}
One can verify that the last inequality implies the first $k$ inequalities, and the last inequality is equivalent to
\begin{equation} \label{firstineqC1}
2C \geq \frac{n_{k+1} + 3k - 3 - 6 \sum_{i\neq\ell}\frac{1}{n_i+2}}{n_{k+1} + 3k - 6 \sum_{i\neq\ell}\frac{1}{n_i+2}}.
\end{equation}
We conclude from all three cases that the quadratic form in \eqref{quadraticform1} is non-negative if and only if $C$ satisfies \eqref{firstineqC1} for every $\ell = 1,\ldots,k$.
\smallskip

\noindent\emph{Step 3: Finding the best possible $C$ in \emph{(II)}.} We can proceed in the same way as in Step 2, by defining a quadratic form on $\R^n$ from inequality (II) and looking for the best possible value of $C$ for which this quadratic form is non-negative. Since the result is the same as the one already obtained in \cite{CD} by a more ad hoc method, we will not go into details here. The condition on $C$ is 
\begin{equation} \label{secondineqC1}
2C \geq \frac{n_{k+1} + 3k - 1 - 6 \sum_{i}\frac{1}{n_i+2}}{n_{k+1} + 3k + 2 - 6 \sum_{i}\frac{1}{n_i+2}}. 
\end{equation}
Since the right hand side of \eqref{firstineqC1} is less than the right hand side of \eqref{secondineqC1} (for any $\ell \in \{1,\ldots,k\}$), we only have \eqref{secondineqC1} as a condition on $C$ and we conclude that the best possible value for $C$ is precisely the value given in \eqref{valueC1}.

\smallskip

\noindent\emph{Step 4: The equality case.} Assume that equality holds at a point. Then one has to have equality in \eqref{firstineq1}, which implies precisely the first condition given in the theorem. Next, one also has to have equality in (I), which implies that the vector $(h^{\alpha_i}_{11},\ldots,h^{\alpha_i}_{nn})$ has to be a linear combination of the vectors given in \eqref{eigenvecs1} for every $i=1,\ldots,k$ and every $\alpha_i\in\Delta_i$. (Remark that due to the choice of $C$, the quadratic form \eqref{quadraticform1} is positive definite on the orthogonal complement of these vectors.) Hence, one obtains exactly the second condition given in the theorem. Finally, one has to have equality in (II). We refer to \cite{CD} to see that this is equivalent to the third condition.
\end{proof}

\begin{remark}
In inequality \eqref{firstineq1}, we omit exactly the squares of components of $h$ with three different indices. Besides the technique used to prove non-negativeness of
the quadratic forms, this is the main difference with the proof in \cite{CD}, where also terms of type $-(h_{\alpha_i \alpha_i}^{\alpha_j})^2$ are omitted, making
the inequality less sharp.
\end{remark}

\begin{theorem} \label{theo2}
Let $M^n$ be a Lagrangian submanifold of a complex space form
$\tilde M^n(4c)$. Let $n_1,\ldots,n_k$ be integers satisfying $2\leq
n_1 \leq \ldots \leq n_k \leq n-1$ and $n_1+\ldots+n_k=n$. Then, at any point of $M^n$, the following holds:
\begin{equation}\begin{aligned}\notag& \delta(n_1,\ldots,n_k)
\leq \frac{n^2\left(k-1-2\sum_{i=2}^k\frac{1}{n_i+2}\right)}{2\left(k-2\sum_{i=2}^k\frac{1}{n_i+2}\right)}\|H\|^2
\\& \hskip.4in +\frac{1}{2}\left(n(n-1)-\sum_{i=1}^k n_i(n_i-1)\right)c.
\end{aligned}\end{equation}

Assume that equality holds at a point $p \in M^n$. Then with the choice of basis and the notations introduced at the beginning of this section, one has
\begin{itemize}
\item $h_{\alpha_i \alpha_j}^A = 0$ if $i \neq j$ and $A \neq \alpha_i,\alpha_j$,
\item if $n_j \neq \min\{n_1,\ldots,n_k\}$: 
$$h_{\alpha_i \alpha_i}^{\beta_j}=0 \mbox{ if } i \neq j \mbox{ and } \sum_{\alpha_j \in \Delta_j} h_{\alpha_j \alpha_j}^{\beta_j} = 0,$$
\item if $n_j = \min\{n_1,\ldots,n_k\}$: 
$$\sum_{\alpha_j \in \Delta_j} h_{\alpha_j \alpha_j}^{\beta_j} = (n_i+2) h_{\alpha_i \alpha_i}^{\beta_j} \mbox{ for any } i \neq j \mbox{ and any } \alpha_i \in \Delta_i.$$
\end{itemize}
\end{theorem}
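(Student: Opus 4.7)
The plan is to adapt the four-step proof of Theorem \ref{theo1} to the degenerate case $n_{k+1}=0$, in which $\Delta_{k+1}$ is empty and only the type (I) conditions are active. As in Step 1, the Gauss equation gives the usual expressions for $\tau$ and each $\tau(L_i)$, and after setting $c=0$ without loss of generality and discarding the squares $-(h^A_{BC})^2$ attached to triples $\{A,B,C\}$ of distinct indices with $B,C$ in different blocks, one obtains an inequality directly analogous to \eqref{firstineq1}. By Lemma \ref{lem1}, establishing it for every Lagrangian submanifold amounts, for every $\ell\in\{1,\dots,k\}$ and every $\gamma_\ell\in\Delta_\ell$, to proving that a quadratic form $Q_\ell$ in the diagonal entries $x_A := h^{\gamma_\ell}_{AA}$ is non-negative.

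In Step 2 I would block-diagonalise the $n\times n$ matrix $M_\ell$ of $Q_\ell$ following the template of Theorem \ref{theo1}. The within-block difference vectors \eqref{eigenvecs1} are eigenvectors of $M_\ell$ with eigenvalue $0$ inside $\Delta_\ell$ and eigenvalue $2$ inside each $\Delta_i$ with $i\neq\ell$; the exceptional eigenvalue $3$ coming from $\Delta_{k+1}$ in Theorem \ref{theo1} is absent. On the orthogonal complement $\mathrm{span}(v_1,\dots,v_k)$ the reduced matrix $M'_\ell\in\R^{k\times k}$ has $(M'_\ell)_{\ell\ell}=2C$, $(M'_\ell)_{ii}=2(C+1/n_i)$ for $i\neq\ell$, and all off-diagonal entries equal to $2C-1$. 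Running Cases 1--3 of that step verbatim, using Lemma \ref{lem2} and Sylvester's criterion, the resulting constraint on $C$ reduces to
$$ 2C\ge\frac{3(k-1)-6\sum_{i\neq\ell}\frac{1}{n_i+2}}{3k-6\sum_{i\neq\ell}\frac{1}{n_i+2}}. $$
Viewed as a function of $\sum_{i\neq\ell}\frac{1}{n_i+2}$ the right-hand side is strictly decreasing, so it is maximal precisely when the omitted term $\frac{1}{n_\ell+2}$ is largest, i.e.\ when $n_\ell=\min\{n_1,\dots,n_k\}$; this yields the sharp value of $C$ appearing in the statement.

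For the equality case I would follow Step 4 of Theorem \ref{theo1}. Equality in the algebraic reduction forces every omitted square to vanish, giving the first bullet, and additionally forces $(h^{\gamma_\ell}_{11},\dots,h^{\gamma_\ell}_{nn})$ to lie in $\ker M_\ell$ for every $\ell$ and every $\gamma_\ell\in\Delta_\ell$. When $n_\ell>\min\{n_1,\dots,n_k\}$ the constraint above is strict, so $M'_\ell$ is invertible on $\mathrm{span}(v_1,\dots,v_k)$ and $\ker M_\ell$ coincides with the $(n_\ell-1)$-dimensional space of within-$\Delta_\ell$ differences, producing the second bullet. When $n_\ell$ is minimal, $\det M'_\ell=0$ and a direct computation, using the identity $2C-1=-1/(k-2\sum_{i\neq\ell}1/(n_i+2))$ that follows from the chosen value of $C$, exhibits the vector with entry $1$ in position $\ell$ and entry $n_i/(n_i+2)$ in each remaining position $i\neq\ell$ as an explicit null vector of $M'_\ell$. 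Reassembling through the $v_i$, the corresponding null direction in $\R^n$ equals $(1/n_\ell)\mathbf{1}_{\Delta_\ell}+\sum_{i\neq\ell}(1/(n_i+2))\mathbf{1}_{\Delta_i}$, which translates precisely into the proportionality in the third bullet. Lemma \ref{lem1} then supplies non-minimal Lagrangian immersions in $\C^n$ realising equality, so the inequality is sharp.

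The main obstacle is the asymmetric bookkeeping in the equality case: $\ker M'_\ell$ jumps in dimension as $\ell$ passes from a non-minimising to a minimising value, and one must check that the two resulting sets of pointwise conditions (second versus third bullet) are mutually consistent across the various choices of upper index $\gamma_\ell$, and that no extra constraints are forced by their interaction.
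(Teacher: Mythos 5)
Your proposal is correct and follows essentially the same route as the paper's own proof: the same reduction to the type (I) quadratic forms with $n_{k+1}=0$, the same block decomposition of $M_\ell$ via the difference eigenvectors and the reduced matrix $M'_\ell$ handled by Lemma \ref{lem2} and Sylvester's criterion, the same observation that the constraint on $C$ is maximised when $n_\ell=\min\{n_1,\ldots,n_k\}$, and the same kernel analysis (including the explicit null vector $a_\ell=1$, $a_i=n_i/(n_i+2)$) in the equality case. The consistency worry you flag at the end is not a genuine obstruction, since the three bullets are obtained independently for each choice of upper index and require no further compatibility check.
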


\begin{remark}
In the case of equality, we don't have information about $h_{\alpha_i \beta_i}^{\gamma_i}$, where $\alpha_i$, $\beta_i$ and $\gamma_i$ are mutually different indices in the same block $\Delta_i$.
\end{remark}

\begin{proof} The set-up of the proof is exactly the same as in the previous case, but now $\Delta_{k+1}=\varnothing$ and hence $n_{k+1}=0$.
We now have
\begin{eqnarray}
\tau - \sum_i \tau(L_i) &=& \sum_A \sum_{i<j}
\sum_{\alpha_i,\alpha_j} (h^A_{\alpha_i \alpha_i}h^A_{\alpha_j
\alpha_j} - (h^A_{\alpha_i \alpha_j})^2) \nonumber \\
&\leq& \sum_A \sum_{i<j} \sum_{\alpha_i,\alpha_j} h^A_{\alpha_i
\alpha_i}h^A_{\alpha_j \alpha_j} - \sum_i \sum_{\alpha_i} \sum_{B
\notin \Delta_i} (h^B_{\alpha_i \alpha_i})^2 \label{firstineq2}
\end{eqnarray}
and we want to prove that the best possible value of $C$ for which \eqref{firstineq2} is less than or equal to
$$ n^2 C \|H\|^2 = C \sum_A \left( \sum_B h^A_{BB}\right)^2, $$
is
\begin{equation} \label{valueC2} C = \frac{k-1-2\sum_{i=2}^k\frac{1}{n_i+2}}
{2\left(k-2\sum_{i=2}^k\frac{1}{n_i+2}\right)}. \end{equation}

We only have to consider inequality (I), which reduces to: for all $\ell \in \{1,\ldots,k\}$ and all $\gamma_{\ell} \in \Delta_{\ell}$
\begin{equation} \label{ineqI2} 
\sum_{i<j} \sum_{\alpha_i,\alpha_j} h_{\alpha_i
\alpha_i}^{\gamma_{\ell}} h_{\alpha_j \alpha_j}^{\gamma_{\ell}} -
\sum_{i\neq\ell} \sum_{\alpha_i}(h_{\alpha_i
\alpha_i}^{\gamma_{\ell}})^2 \leq C \left( \sum_B
h^{\gamma_{\ell}}_{BB}\right)^2, 
\end{equation}
or, equivalently,
\begin{equation}\begin{aligned} &\notag (2C-1) \sum_{i<j} \sum_{\alpha_i,\alpha_j} x_{\alpha_i}
x_{\alpha_j} +
2C \sum_i \sum_{\alpha_i<\beta_i} x_{\alpha_i} x_{\beta_i} \\& \hskip.3in +
(C+1) \sum_{i \neq \ell} \sum_{\alpha_i} x_{\alpha_i}^2 + C \sum_{\alpha_{\ell}}
x_{\alpha_{\ell}}^2 \geq 0\end{aligned}\end{equation}
for all $(x_1,\ldots,x_n)\in\R^n$, where we have put $h^{\gamma_{\ell}}_{AA}=x_A$ for $A=1,\ldots,n$ as before.
Requiring non-negativeness of this quadratic form can be done in exactly the same way as before, considering now only the 
$k^2$ blocks in the upper left corner of $M_{\ell}$ and therefore only 
giving the first $k$ conditions of \eqref{firstcondC1}. It is clear that the $k$th condition implies all the other ones and hence the necessary and sufficient
condition on $C$ for the quadratic form to be non-negative is
$$ \frac{2C}{2C-1} + \sum_{i=1 \atop i \neq \ell}^k \frac{n_i}{n_i+2} \leq 0 \mbox{ for } \ell = 1,\ldots,n. $$
Since $n_1 \leq n_2 \leq \ldots \leq n_k$, the above condition for $\ell = 1$ implies all the others and we obtain
$$ 2C \geq \frac{k-1-2\sum_{i=2}^k \frac{1}{n_i+2}}{k-2\sum_{i=2}^k \frac{1}{n_i+2}}, $$
such that the best possible value for $C$ is indeed the one given in \eqref{valueC2}.

Now let us investigate when equality is attained in the inequality we just proved. In order for this to happen, one has to have equality in \eqref{firstineq2} and one has to have equality in \eqref{ineqI2} for all $\ell \in \{1,\ldots,k\}$ and all $\gamma_{\ell} \in \Delta_{\ell}$.

From equality in \eqref{firstineq2}, we conclude that $h_{\alpha_i \alpha_j}^A=0$ for $i \neq j$ and $A \neq \alpha_i, \alpha_j$.

If equality holds in \eqref{ineqI2}, the vector $(h_{11}^{\gamma_{\ell}}, \ldots, h_{nn}^{\gamma_{\ell}})$ has to be in the kernel of $M_{\ell}$. If $n_{\ell} \neq \mathrm{min}\{n_1,\ldots,n_k\}$ it follows from the proof of Theorem \ref{theo1} that this kernel is spanned by the vectors \eqref{eigenvecs1} (with $i = \ell$). This corresponds to the first possibility given in the theorem.  If $n_{\ell} = \mathrm{min}\{n_1,\ldots,n_k\}$, it follows from the proof of Theorem \ref{theo1} that the kernel of $M_{\ell}$ is larger due to the choice of $C$. In particular, there will be non-zero linear combinations of the vectors $v_1,\ldots,v_k$, given in \eqref{othereigenvecs1}, in the kernel. Assume that
$$ M_{\ell} \left( \sum_{i=1}^k a_i v_i \right) = 0 $$
for some real numbers $a_1,\ldots,a_k$. A straightforward computation shows that this is equivalent to
$$ \left\{ \begin{array}{l} \displaystyle{\forall i \in \{1,\ldots,k\}\setminus\{\ell\}: \left(\sum_{j=1}^k a_j\right)(2C-1) + a_i\left(1+\frac{2}{n_i}\right) = 0,} \\
\displaystyle{\left(\sum_{j=1}^k a_j\right)(2C-1) + a_{\ell} = 0.} \end{array} \right. $$
One can check that this system indeed has a non-zero solution for $(a_1,\ldots,a_k)$ if and only if
$$ C = \frac{k-1-2\sum_{i \neq \ell} \frac{1}{n_i+2}}{2 \left( k-2\sum_{i \neq \ell} \frac{1}{n_i+2} \right)}, $$
i.e., if and only if $n_{\ell}=\min\{n_1,\ldots,n_k\}$. In this case, the solution is given by
$$ a_i=\frac{\lambda n_i}{n_i+2} \mbox{ for } i\neq\ell \mbox{ and } a_{\ell}=\lambda $$
for some real number $\lambda$. The vector
$(h_{11}^{\gamma_{\ell}}, \ldots, h_{nn}^{\gamma_{\ell}})$ thus
has to satisfy the conditions given in the last possibility in the
statement of the theorem.
\end{proof}

\end{document}